\DeclareMathOperator{\dive}{div}
\def\ds{\displaystyle}
\def\eps{{\varepsilon}}
\def\N{\mathbb{N}}
\def\O{\Omega}
\def\R{\mathbb{R}}
\def\A{\mathcal{A}}
\def\HH{\mathcal{H}}
\newcommand{\be}{\begin{equation}}
\newcommand{\ee}{\end{equation}}
\newcommand{\bib}[4]{\bibitem{#1}{\sc#2: }{\it#3. }{#4.}}
\def\avint{\mathop{\rlap{\hskip2.5pt---}\int}\nolimits}
\numberwithin{equation}{section}
\theoremstyle{plain}
\newtheorem{theo}{Theorem}[section]
\newtheorem{prop}[theo]{Proposition}
\newtheorem{conj}[theo]{Conjecture}
\theoremstyle{remark}
\newtheorem{rema}[theo]{Remark}
\title[Some inequalities involving perimeter and torsional rigidity]{Some inequalities involving perimeter and torsional rigidity}
\author[L. Briani]{Luca Briani}
\author[G. Buttazzo]{Giuseppe Buttazzo}
\author[F. Prinari]{Francesca Prinari}
\date{}
\begin{document}

\begin{abstract}
We consider shape functionals of the form $F_q(\O)=P(\O)T^q(\O)$ on the class of open sets of prescribed Lebesgue measure. Here $q>0$ is fixed, $P(\O)$ denotes the perimeter of $\O$ and $T(\O)$ is the torsional rigidity of $\O$. The minimization and maximization of $F_q(\O)$ is considered on various classes of admissible domains $\O$: in the class $\A_{all}$ of {\it all domains}, in the class $\A_{convex}$ of {\it convex domains}, and in the class $\A_{thin}$ of {\it thin domains}.

\end{abstract}

\maketitle

\textbf{Keywords:} torsional rigidity, shape optimization, perimeter, convex domains.

\textbf{2010 Mathematics Subject Classification:} 49Q10, 49J45, 49R05, 35P15, 35J25.

\section{Introduction\label{sintro}}

In this paper, given an open set $\O\subset\R^d$ with finite Lebesgue measure, we consider the quantities
\[\begin{split}
&P(\O)=\text{perimeter of }\O;\\
&T(\O)=\text{torsional rigidity of }\O.
\end{split}\]
The perimeter $P(\O)$ is defined according to the De Giorgi formula
$$P(\O)=\sup\left\{\int_\O\dive\phi\,dx\ :\ \phi\in C^1_c(\R^d;\R^d),\ \|\phi\|_{L^\infty(\R^d)}\le1\right\}.$$
The {\it scaling property} of the perimeter is
$$P(t\O)=t^{d-1}P(\O)\qquad\text{for every }t>0$$
and the relation between $P(\O)$ and the Lebesgue measure $|\O|$ is the well-known {\it isoperimetric inequality}:
\be\label{isoper}
\frac{P(\O)}{|\O|^{(d-1)/d}}\ge\frac{P(B)}{|B|^{(d-1)/d}}
\ee
where $B$ is any ball in $\R^d$. In addition, the inequality above becomes an equality if and only if $\O$ is a ball (up to sets of Lebesgue measure zero).\\
The torsional rigidity $T(\O)$ is defined as
$$T(\O)=\int_\O u\,dx$$
where $u$ is the unique solution of the PDE
\be\label{pdetorsion}\begin{cases}
-\Delta u=1&\text{in }\O,\\
u\in H^1_0(\O).
\end{cases}
\ee
Equivalently, $T(\O)$ can be characterized through the maximization problem
$$T(\O)=\max\Big\{\Big[\int_\O u\,dx\Big]^2\Big[\int_\O|\nabla u|^2\,dx\Big]^{-1}\ :\ u\in H^1_0(\O)\setminus\{0\}\Big\}.$$
Moreover $T$ is increasing with respect to the set inclusion, that is
$$\O_1\subset\O_2\Longrightarrow T(\O_1)\le T(\O_2)$$
and $T$ is additive on disjoint families of open sets. The scaling property of the torsional rigidity is
$$T(t\O)=t^{d+2}T(\O),\qquad\text{for every }t>0,$$
and the relation between $T(\O)$ and the Lebesgue measure $|\O|$ is the well-known {\it 
Saint-Venant inequality} (see for instance \cite{he06}, \cite{hepi05}):
\be\label{stven}
\frac{T(\O)}{|\O|^{(d+2)/d}}\le\frac{T(B)}{|B|^{(d+2)/d}}.
\ee
Again, the inequality above becomes an equality if and only if $\O$ is a ball (up to sets of capacity zero). If we denote by $B_1$ the unitary ball of $\R^d$ and by $\omega_d$ its Lebesgue measure, then the solution of \eqref{pdetorsion}, with $\O=B_1$, is
$$u(x)=\frac{1-|x|^2}{2d}$$
which provides
\be\label{torball}
T(B_1)=\frac{\omega_d}{d(d+2)}.
\ee

We are interested in the problem of minimizing or maximizing quantities of the form
$$P^\alpha(\O)T^\beta(\O)$$
on some given class of open sets $\O\subset\R^d$ having a prescribed Lebesgue measure $|\O|$, where $\alpha,\beta$ are two given exponents. Similar problems have been considered for shape functionals involving:
\begin{itemize}
\item[-]the torsional rigidity and the first eigenvalue of the Laplacian in \cite{bbp20}, \cite{bfnt16}, \cite{bra14}, \cite{bgm17}, \cite{dpga14}, \cite{kj78a}, \cite{kj78b}, \cite{luzu19};
\item[-]the torsional rigidity and the Newtonian capacity in \cite{bb20};
\item[-]the perimeter and the first eigenvalue of the Laplacian in \cite{flxx};
\item[-]the perimeter and the Newtonian capacity in \cite{CFG05}, \cite{IGP11}.
\end{itemize}

The case $\beta=0$ reduces to the isoperimetric inequality, and we have, denoting by $\O^*_m$ a ball of measure $m$,
$$\begin{cases}
\min\big\{P(\O)\ :\ |\O|=m\big\}=P(\O^*_m)\\
\sup\big\{P(\O)\ :\ |\O|=m\big\}=+\infty.
\end{cases}$$
Similarly, in the case $\alpha=0$, the Saint Venant inequality yields 
$$\max\big\{T(\O)\ :\ |\O|=m\big\}=T(\O^*_m)=\frac{m}{d(d+2)}\Big(\frac{m}{\omega_d}\Big)^{2/d}$$
while 
$$\inf\big\{T(\O)\ :\ |\O|=m\big\}=0.$$
Indeed if we choose $\O_n=\cup_{k=1}^n B_{n,k}$ where $B_{n,k}$ are disjoint balls of measure $m/n$ each, we get for every $n\in\N$
$$\inf\big\{T(\O)\ :\ |\O|=m\big\}\le T(\Omega_n)=\frac{m^{(d+2)/d}}{d(d+2)\omega_d^{2/d}}\,n^{-2/d}.$$

The case when $\alpha$ and $\beta$ have a different sign is also immediate; for instance, if $\alpha>0$ and $\beta<0$ we have from \eqref{isoper} and \eqref{stven}
$$\begin{cases}
\min\big\{P^\alpha(\O)T^\beta(\O)\ :\ |\O|=m\big\}=P^\alpha(\O^*_m)T^\beta(\O^*_m)\\
\sup\big\{P^\alpha(\O)T^\beta(\O)\ :\ |\O|=m\big\}=+\infty,
\end{cases}$$
and similarly, if $\alpha<0$ and $\beta>0$ we have
$$\begin{cases}
\inf\big\{P^\alpha(\O)T^\beta(\O)\ :\ |\O|=m\big\}=0\\
\max\big\{P^\alpha(\O)T^\beta(\O)\ :\ |\O|=m\big\}=P^\alpha(\O^*_m)T^\beta(\O^*_m).
\end{cases}$$

The cases we will investigate are the remaining ones; with no loss of generality we may assume $\alpha=1$, so that the optimization problems we consider are for the quantities
$$P(\O)T^q(\O),\qquad\text{with }q>0.$$
In order to remove the Lebesgue measure constraint $|\O|=m$ we consider the {\it scaling free} functionals
$$F_q(\O)=\frac{P(\O)T^q(\O)}{|\O|^{\alpha_q}}\qquad\text{with }\alpha_q=1+q+\frac{2q-1}{d}.$$
In the following sections we study the minimization and the maximization problems for the shape functionals $F_q$ on various classes of domains. More precisely we consider the cases below.

The class of {\it all} domains $\O$ (nonempty)
$$\A_{all}=\big\{\O\subset\R^d\ :\ \O\ne\emptyset\big\}$$
will be considered in Section \ref{sall}; we show that for every $q>0$ both the maximization and the minimization problems for $F_q$ on $\A_{all}$ are ill posed.

The class of {\it convex} domains $\O$
$$\A_{convex}=\big\{\O\subset\R^d\ :\ \O\ne\emptyset,\ \O\text{ convex}\big\}$$
will be considered in Section \ref{sconvex}; we show that for $0<q<1/2$ the maximization problem for $F_q$ on $\A_{convex}$ is ill posed, whereas the minimization problem is well posed. On the contrary, when $q>1/2$ the minimization problem for $F_q$ on $\A_{convex}$ is ill posed, whereas the maximization problem is well posed. In the threshold case $q=1/2$ the precise value of the infimum of $F_{1/2}$ is provided; concerning the precise value of the supremum of $F_{1/2}$ an interesting conjecture is stated. At present, the conjecture has been shown to be true in the case $d=2$, while the question is open in higher dimensions.

The class of thin domains $\A_{thin}$, suitably defined, will be considered in Section \ref{sthin}. If $h(s)$ represents the asymptotical {\it local thickness} of the thin domain as $s$ varies in a $d-1$ dimensional domain $A$, the maximization of the functional $F_{1/2}$ on $\A_{thin}$ reduces to the maximization of a functional defined on nonnegative functions $h$ defined on $A$; this allows us to prove the conjecture for any dimension $d$ on the class of {\it thin convex} domains.


\section{Optimization in the class of all domains\label{sall}}

In this section we show that the minimization and the maximization problems for the shape functionals $F_q$ are both ill posed, for every $q>0$.

\begin{theo}\label{tall}
There exist two sequences $\O_{1,n}$ and $\O_{2,n}$ of smooth domains such that for every $q>0$ we have
$$F_q(\O_{1,n})\to0\qquad\text{and}\qquad F_q(\O_{2,n})\to+\infty.$$
In particular, we have
$$\begin{cases}
\inf\big\{F_q(\O)\ :\ \O\in\A_{all},\ \O\text{ smooth}\big\}=0\\
\sup\big\{F_q(\O)\ :\ \O\in\A_{all},\ \O\text{ smooth}\big\}=+\infty.
\end{cases}$$
\end{theo}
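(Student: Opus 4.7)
The plan is to exhibit two explicit families of smooth sets and to extract both limits from the factorisation
$$F_q(\Omega)=\frac{P(\Omega)}{|\Omega|^{(d-1)/d}}\cdot\left(\frac{T(\Omega)}{|\Omega|^{(d+2)/d}}\right)^{\!q},$$
which comes from $\alpha_q=(d-1)/d+q(d+2)/d$. The first factor is bounded below by the isoperimetric quotient of a ball, and the second is bounded above by the Saint-Venant quotient of a ball. Hence to force $F_q\to+\infty$ uniformly in $q>0$ it is enough to blow up the first factor while keeping the second bounded, and to force $F_q\to 0$ uniformly in $q>0$ it is enough to collapse the second factor while keeping the first bounded.

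For $\Omega_{2,n}$ I would attach to the unit ball many small disjoint spherical bumps: $\Omega_{2,n}=B_1\cup\bigsqcup_{k=1}^{N_n}B_{r_n}(x_k^n)$ with the $x_k^n$ outside $B_1$ so that all of the balls are pairwise disjoint, and with $N_n\to+\infty$, $r_n\to 0$ chosen so that $N_nr_n^{d-1}\to+\infty$ and $N_nr_n^d\to 0$ (possible because $(d-1)/d<1$; e.g., $r_n=1/n$ and $N_n=n^{d-1/2}$). Additivity of $P$ and $T$ on disjoint open sets then gives $|\Omega_{2,n}|\to\omega_d$, $T(\Omega_{2,n})\to T(B_1)$, and $P(\Omega_{2,n})\to+\infty$, whence $F_q(\Omega_{2,n})\to+\infty$ for every $q>0$. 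A standard mollification of the boundary yields a truly smooth version with the same asymptotics.

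For $\Omega_{1,n}$ I would use a Cioranescu--Murat-type homogenisation: remove from $B_1$ a fine $\delta_n$-lattice of small balls, $\Omega_{1,n}=B_1\setminus\bigcup_k\overline{B_{\rho_n}(x_k^n)}$, with $\rho_n$ strictly supercritical with respect to the relevant threshold. In $d\ge 3$ the choice $\rho_n=\delta_n^{d/(d-1)}$ works: with $N_n\sim\delta_n^{-d}$ the added perimeter $N_n\rho_n^{d-1}\asymp 1$ stays bounded, the removed volume $N_n\rho_n^d\to 0$, and the capacity per unit volume $N_n\rho_n^{d-2}\sim\delta_n^{-d/(d-1)}\to+\infty$. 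In $d=2$ one can take $\rho_n=\delta_n^3$, which is supercritical because the two-dimensional critical radius is exponentially small in $\delta_n^{-2}$, while $N_n\rho_n\to 0$ and $N_n\rho_n^2\to 0$ keep $P$ and $|\Omega|$ controlled. Supercriticality forces $\lambda_1(\Omega_{1,n})\to+\infty$, hence $T(\Omega_{1,n})\le|\Omega_{1,n}|/\lambda_1(\Omega_{1,n})\to 0$; the Saint-Venant factor of $F_q$ therefore vanishes while the isoperimetric factor stays bounded, and a final smoothing of the hole boundaries gives a smooth domain with the same limits.

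The main technical input is the $\lambda_1\to+\infty$ (equivalently $T\to 0$) conclusion for $\Omega_{1,n}$, which is the classical supercritical Cioranescu--Murat homogenisation theorem; the rest reduces to direct computation from the definitions of $P$, $T$, $|\Omega|$ and the scaling properties recalled in the introduction. If one allows the sequence $\Omega_{1,n}$ to depend on $q$, a much simpler alternative is available in the range $q\ge 1/2$: the disjoint union of $n$ congruent unit balls gives, by additivity and the scaling of $P$, $T$, $|\Omega|$, $F_q\asymp n^{-(2q-1)/d}\to 0$, so that only the range $0<q<1/2$ genuinely requires the homogenisation step.
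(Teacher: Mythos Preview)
Your proof is correct and follows the same overall strategy as the paper, with only minor implementation differences worth noting. For the supremum, the paper takes a \emph{connected} wiggly perturbation of $B_1$ with $B_{1/2}\subset\Omega_{2,n}\subset B_2$ and $P(\Omega_{2,n})\to+\infty$, using the monotonicity $T(\Omega_{2,n})\ge T(B_{1/2})$ rather than additivity; your disjoint-balls construction is an equally valid and more explicit variant. For the infimum, the paper also perforates $B_1$ \`a la Cioranescu--Murat, but at the \emph{critical} scale $r_\eps=c\,\eps^{d/(d-2)}$ (resp.\ $e^{-1/(c\eps^2)}$ when $d=2$), so that $T(\Omega_\eps)\to\int_{B_1}u_c$ with $-\Delta u_c+K_c u_c=1$ and $\int u_c\le\omega_d/K_c$, and then sends $c\to\infty$ via a diagonal argument; your supercritical choice reaches $T\to 0$ in one step at the price of quoting the supercritical regime of the same homogenisation theorem. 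Your factorisation $F_q=(P/|\Omega|^{(d-1)/d})\cdot(T/|\Omega|^{(d+2)/d})^q$ is a clean way to make the uniformity in $q$ transparent, which the paper leaves implicit. One small slip: in your closing remark the easy disjoint-balls argument gives $F_q\asymp n^{-(2q-1)/d}\to 0$ only for $q>1/2$, not $q\ge 1/2$ (at $q=1/2$ the quantity is constant in $n$).
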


\begin{proof}
In order to show the $\sup$ equality it is enough to take as $\O_{2,n}$ a perturbation of the unit ball $B_1$ such that
$$B_{1/2}\subset\O_{2,n}\subset B_2\qquad\text{and}\qquad P(\O_{2,n})\to+\infty.$$
Then we have
$$|\O_{2,n}|\le|B_2|,\qquad T(\O_{2,n})\ge T(B_{1/2}),$$
where we used the monotonicity of the torsional rigidity. Then
$$F_q(\O_{2,n})\ge\frac{P(\O_{2,n})T^q(B_{1/2})}{|B_2|^{\alpha_q}}\to+\infty.$$
In order to prove the $\inf$ equality we take as $\O_\eps$ the unit ball $B_1$ to which we remove a periodic array of holes; the centers of two adjacent holes are at distance $\eps$ and the radii of the holes are
$$r_\eps=\begin{cases}
e^{-1/(c\eps^2)}&\text{if }d=2\\
c\eps^{d/(d-2)}&\text{if }d>2.
\end{cases}$$
It is easy to see that, as $\eps\to0$, we have
$$|\O_\eps|\to|B_1|\qquad\text{and}\qquad P(\O_\eps)\to P(B_1).$$
Concerning the torsion $T(\O_\eps)$, we have (see \cite{ciomur})
$$T(\O_\eps)\to\int_{B_1} u_c\,dx$$
where $u_c$ is the nonnegative function which solves 
$$\begin{cases}
-\Delta u_c+K_cu_c=1&\text{in }B_1\\
u_c\in H^1_0(B_1),
\end{cases}$$
being $K_c$ the constant
$$K_c=\begin{cases}
c\pi/2&\text{if }d=2\\
d(d-2)2^{-d}\omega_d c^{d-2}&\text{if }d>2.
\end{cases}$$
Since for every $c>0$ we have that
$$\int_{B_1}|\nabla u_c(x)|^2+K_c u^2_c(x)\,dx=\int_{B_1} u_c\,dx$$
we get that
$$\int_{B_1}u_c\,dx\le\frac{\omega_d} {K_c}.$$
Therefore, a diagonal argument allows us to construct a sequence $\O_{1,n}$ such that
$$|\O_{1,n}|\to|B_1|,\qquad P(\O_{1,n})\to P(B_1),\qquad T(\O_{1,n})\to0,$$
which concludes the proof.
\end{proof}

\section{Optimization in the class of convex domains\label{sconvex}}

In this section we consider only domains $\O$ which are {\it convex}. A first remark is in the proposition below and shows that in some cases the optimization problems for the shape functional $F_q$ is still ill posed.

\begin{prop}\label{illq}
We have
\[\begin{cases}
\inf\big\{F_q(\O)\ :\ \O\in\A_{convex}\big\}=0&\text{for every }q>1/2;\\
\sup\big\{F_q(\O)\ :\ \O\in\A_{convex}\big\}=+\infty&\text{for every }q<1/2.
\end{cases}\]
\end{prop}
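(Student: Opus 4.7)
My plan is to test both claims against a single one-parameter family of thin convex slabs
$$\Omega_h = (0,1)^{d-1}\times(0,h),\qquad h\in(0,1].$$
These are visibly convex, with $|\Omega_h|=h$ and $P(\Omega_h)=2+2(d-1)h$, so both the measure and the perimeter are under immediate control as $h\to 0^+$. Everything will hinge on a two-sided estimate of the form
$$c_1\,h^3\le T(\Omega_h)\le c_2\,h^3,$$
with constants $c_1,c_2>0$ depending only on $d$. Since $\alpha_q=1+q+(2q-1)/d$, a short arithmetic check gives $3q-\alpha_q=(2q-1)(d-1)/d$, so $F_q(\Omega_h)$ is comparable to a constant times $h^{(2q-1)(d-1)/d}$. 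In dimension $d\ge 2$ this exponent is positive when $q>1/2$ (hence $F_q(\Omega_h)\to 0$) and negative when $q<1/2$ (hence $F_q(\Omega_h)\to+\infty$), yielding exactly the two statements of the proposition.

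For the upper bound on $T(\Omega_h)$ I would use the one-dimensional Poincar\'e inequality in the thin coordinate $x_d$: every $u\in H^1_0(\Omega_h)$ vanishes on both slices $\{x_d=0\}$ and $\{x_d=h\}$, so $\int_0^h u^2\,dx_d\le (h/\pi)^2\int_0^h|\partial_{x_d}u|^2\,dx_d$. Integrating in $x'$ and combining with Cauchy--Schwarz $\bigl(\int_{\Omega_h}u\bigr)^2\le |\Omega_h|\int_{\Omega_h}u^2$, the variational characterisation of $T$ recalled in the introduction gives
$$T(\Omega_h)\le |\Omega_h|\,\frac{h^2}{\pi^2}=\frac{h^3}{\pi^2}.$$

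For the matching lower bound I would plug into the same variational formula the separated test function
$$v(x',x_d) = \psi(x')\,x_d(h-x_d),$$
where $\psi\in C^\infty_c((0,1)^{d-1})$ is a fixed, nonnegative, nontrivial cutoff; this ensures $v\in H^1_0(\Omega_h)$. The integrals then factorise cleanly: the numerator $\bigl(\int v\bigr)^2$ equals $\|\psi\|_{L^1}^2\,h^6/36$, whereas $\int|\nabla v|^2=\|\psi\|_{L^2}^2\,h^3/3+\|\nabla\psi\|_{L^2}^2\,h^5/30$, whose leading order as $h\to 0^+$ scales as $h^3$. The resulting Rayleigh quotient is bounded below by a positive constant times $h^3$ uniformly for $h\le 1$. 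This lower estimate is the only mildly technical step: the naive one-dimensional profile $x_d(h-x_d)$ fails to vanish on the lateral boundary and must be truncated in $x'$, but since $v$ is separated the extra horizontal gradient contributes only a lower-order $h^5$ term that does not spoil the correct $h^3$ scaling. With both bounds in hand the proposition is immediate.
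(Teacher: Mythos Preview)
Your proof is correct and follows essentially the same strategy as the paper: test $F_q$ on thin slabs and read off the exponent $(2q-1)(d-1)/d$ of the thickness parameter. The only difference is that the paper quotes the asymptotic $T(\O_\eps)\approx\eps^3\HH^{d-1}(A)/12$ from an external reference, whereas you derive the two-sided bound $T(\Omega_h)\asymp h^3$ directly from the variational characterisation via Poincar\'e and an explicit test function, making your argument self-contained.
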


\begin{proof}
Let $A$ be a smooth convex $d-1$ dimensional set and for every $\eps>0$ consider the domain $\O_\eps\in\A_{convex}$ given by
$$\O_\eps=A\times]-\eps/2,\eps/2[.$$
We have (for the torsion asymptotics see for instance \cite{bbp20})
\[\begin{split}
&P(\O_\eps)\approx2\HH^{d-1}(A),\\
&T(\O_\eps)\approx\frac{\eps^3}{12}\HH^{d-1}(A),\\
&|\O_\eps|=\eps\HH^{d-1}(A),
\end{split}\]
so that
\be\label{slab}F_q(\O_\eps)\approx\frac{2}{12^q\big(\HH^{d-1}(A)\big)^{(2q-1)/d}}\,\eps^{(2q-1)(d-1)/d}.\ee
Letting $\eps\to0$ achieves the proof.
\end{proof}

We show now that in some other cases the optimization problems for the shape functional $F_q$ is well posed. Let us begin to consider the case $q=1/2$.

\begin{prop}\label{polya}
We have
\be\label{polyaineq}
\inf\big\{F_{1/2}(\O)\ :\ \O\in\A_{convex}\big\}=3^{-1/2}
\ee
and the infimum is asymptotically reached by domains of the form
$$\O_\eps=A\times]-\eps/2,\eps/2[$$
as $\eps\to0$, where $A$ is any $d-1$ dimensional convex set.
\end{prop}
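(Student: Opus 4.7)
The plan is to establish \eqref{polyaineq} by matching upper and lower bounds. For the upper bound I would recycle the slab domains $\O_\eps=A\times\,]-\eps/2,\eps/2[$ already introduced in the proof of Proposition \ref{illq}: at the threshold $q=1/2$ the exponent $(2q-1)(d-1)/d$ appearing in \eqref{slab} vanishes and the asymptotic expression specializes to
$$F_{1/2}(\O_\eps) \xrightarrow[\eps\to 0]{} \frac{2}{\sqrt{12}}=3^{-1/2}.$$
This simultaneously proves $\inf F_{1/2}\le 3^{-1/2}$ on $\A_{convex}$ and identifies a minimizing family, so only the matching lower bound remains.

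The lower bound is equivalent to the Pólya--Makai type inequality $T(\O)\,P(\O)^2\ge |\O|^3/3$ for every convex $\O$, and I would prove it in three steps using the distance function $d(x)=\dist(x,\partial\O)$ and the distribution function $V(t)=|\{x\in\O : d(x)>t\}|$. \emph{Step 1 (reduction to 1D).} Since $\O$ is convex, $d$ is $1$-Lipschitz with $|\nabla d|=1$ a.e.; plugging the test function $u=\phi(d)$ with $\phi(0)=0$ into the variational characterization of $T(\O)$ and using the coarea formula in the denominator and an integration by parts in the numerator yields
$$T(\O)\ge \frac{\bigl(\int_0^\infty V(t)\phi'(t)\,dt\bigr)^2}{\int_0^\infty \phi'(t)^2(-V'(t))\,dt}.$$
Optimizing $\phi$ by Cauchy--Schwarz (equality attained at $\phi'=V/(-V')$) produces the Hardy-type bound $T(\O)\ge \int_0^\infty V(t)^2/(-V'(t))\,dt$. \emph{Step 2 (convexity).} For each $t\ge 0$ the inner parallel set $\{d>t\}$ is convex and contained in $\O$, so $-V'(t)=P(\{d>t\})\le P(\O)$; inserted into Step 1 this gives $T(\O)\,P(\O)\ge \int_0^\infty V(t)^2\,dt$. \emph{Step 3 (pointwise bound on $V$).} Integrating the same inequality $-V'\le P(\O)$ yields $V(t)\ge(|\O|-tP(\O))_+$, and a direct one-variable integration gives $\int_0^\infty V(t)^2\,dt\ge |\O|^3/(3P(\O))$. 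Combining the three steps produces $T(\O)\ge|\O|^3/(3P(\O)^2)$, i.e.\ $F_{1/2}(\O)\ge 3^{-1/2}$, completing the proof.

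The only delicate point is Step 1: one needs the a.e.\ eikonal identity $|\nabla d|=1$ on convex bodies (so that the coarea formula collapses without cross terms) and the admissibility of $\phi(d)$ in $H^1_0(\O)$; both are classical facts about the distance function on convex sets. Steps 2 and 3 are one-line consequences of the convexity of inner parallel sets and an elementary integral, so the real content of the argument is concentrated in the sharp Hardy-type reduction of Step 1, whose sharpness is confirmed by the slab calculation already used for the matching upper bound.
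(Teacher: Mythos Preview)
Your argument is correct. The overall architecture matches the paper's proof exactly: the upper bound comes from the slab asymptotics \eqref{slab} at $q=1/2$, and the lower bound is the P\'olya--Makai inequality $T(\O)P(\O)^2\ge|\O|^3/3$ for convex bodies.

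The only difference is that the paper simply \emph{cites} this inequality (attributing it to P\'olya \cite{polya60}, see also \cite{dpga14}), while you supply a complete proof via the distance function and inner parallel sets. Your three steps are in fact a clean reconstruction of the classical argument: the variational lower bound $T(\O)\ge\int_0^\infty V^2/(-V')\,dt$ using radial test functions $\phi(d)$ and the coarea formula, the perimeter monotonicity $-V'(t)=P(\{d>t\})\le P(\O)$ for convex inner parallel bodies, and the elementary integration of $V(t)\ge(|\O|-tP(\O))_+$. So what you have written is not an alternative route but rather an unpacking of the black-box citation; it buys self-containment at the cost of length, which is a reasonable trade if one does not want to invoke \cite{polya60} externally. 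The technical caveats you flag (eikonal identity $|\nabla d|=1$ a.e.\ and $\phi(d)\in H^1_0(\O)$) are indeed standard for bounded convex $\O$ and cause no trouble.
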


\begin{proof}
Thanks to a classical result by Polya (\cite{polya60}, see also Theorem 5.1 of \cite{dpga14}) it holds
$$T(\O)\ge\frac13\frac{|\O|^{3}}{(P(\O))^2}.$$
Then
$$F_{1/2}(\O)=\frac{P(\O)(T(\O))^{1/2}}{|\O|^{3/2}}\ge3^{-1/2}$$
for any bounded open convex set. Taking into account \eqref{slab}, we get \eqref{polyaineq}.
\end{proof}

Concerning the supremum of $F_{1/2}(\O)$ in the class $\A_{convex}$ we can only show that it is finite.

\begin{prop}\label{finite}
For every $\O\in\A_{convex}$ we have
\be\label{finitebound} F_{1/2}(\O)\le\frac{2^d d^{3d/2}}{\omega_d}\sqrt{\frac{d}{d+2}}\;\ee.
\end{prop}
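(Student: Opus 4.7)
The plan is to apply John's theorem to sandwich $\O$ between two homothetic ellipsoids, and then to estimate $|\O|$, $T(\O)$, $P(\O)$ separately in terms of the semi-axes of the inscribed John ellipsoid. Concretely, by John's theorem applied to the convex body $\O$, and after a rigid motion (which changes none of the quantities at stake), we may assume that there is an axis-aligned ellipsoid $E=\bigl\{x\in\R^d:\sum_i x_i^2/a_i^2\le 1\bigr\}$ centered at the origin with semi-axes $a_1,\dots,a_d>0$ satisfying $E\subset\O\subset d\,E$.

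I would then estimate the three ingredients of $F_{1/2}(\O)$ as follows. For the volume, I would keep it simple and use $|\O|\ge|E|=\omega_d\prod_i a_i$. For the torsional rigidity, monotonicity of $T$ with respect to inclusion together with its scaling law yield $T(\O)\le T(d\,E)=d^{d+2}T(E)$, and the quadratic ansatz $u(x)=\bigl(1-\sum_i x_i^2/a_i^2\bigr)\big/\bigl(2\sum_i 1/a_i^2\bigr)$ solves \eqref{pdetorsion} on $E$, leading to the closed form $T(E)=|E|\big/\bigl((d+2)\sum_i 1/a_i^2\bigr)$. For the perimeter, I would apply twice the monotonicity of $P$ on nested convex sets (which follows from the $1$-Lipschitz property of the nearest-point projection onto a convex body): first $P(\O)\le P(d\,E)=d^{d-1}P(E)$, then $P(E)\le P\bigl(\prod_i[-a_i,a_i]\bigr)=2^d\sum_j\prod_{i\ne j}a_i$.

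Plugging these three estimates into $F_{1/2}(\O)=P(\O)\sqrt{T(\O)}\,/\,|\O|^{3/2}$ and rewriting $\sum_j\prod_{i\ne j}a_i=\bigl(\prod_i a_i\bigr)\sum_j 1/a_j$, the factors $\prod_i a_i$ cancel and the only remaining dependence on the axis-lengths is the ratio $\sum_j(1/a_j)\big/\sqrt{\sum_j 1/a_j^2}$, which the Cauchy--Schwarz inequality bounds by $\sqrt d$. Collecting the dimensional prefactors $d^{d-1}\cdot d^{(d+2)/2}\cdot\sqrt d\cdot 2^d\cdot\omega_d^{-1}\cdot(d+2)^{-1/2}$ should then produce exactly the claimed constant $\frac{2^d d^{3d/2}}{\omega_d}\sqrt{d/(d+2)}$.

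The main obstacle I anticipate is making sure that this bookkeeping really delivers a bound depending only on $d$ and not on the aspect ratios $a_d/a_1$ of the John ellipsoid; this is precisely what is arranged by the Cauchy--Schwarz step above, which pairs the $\sum_i 1/a_i^2$ arising from $T(E)$ with the $\sum_j 1/a_j$ arising from the perimeter of the bounding box. Every other input -- John's theorem, the explicit torsion function on an ellipsoid, and monotonicity of perimeter on nested convex sets -- is classical, so once the Cauchy--Schwarz cancellation is in place the bound follows from routine algebra.
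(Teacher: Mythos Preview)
Your proposal is correct and follows essentially the same route as the paper: John's ellipsoid to sandwich $\O$, the explicit torsion of an ellipsoid, the perimeter bound via the circumscribed cuboid, and Cauchy--Schwarz to eliminate the dependence on the semi-axes. The only cosmetic difference is that you make explicit the use of monotonicity of perimeter on nested convex sets, which the paper uses without comment.
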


\begin{proof}
By the John's ellipsoid Theorem \cite{J48}, there exists an ellipsoid, that without loss of generality we may assume centered at the origin,
$$E_a=\bigg\{x\in\R^d\ :\ \sum_{i=1}^d\frac{x_i^2}{a_i^2}<1\bigg\},\qquad a=(a_1,\dots,a_d),\text{ with }a_i>0$$
such that $E_a\subset\O\subset dE_a$. Then we have
\be\label{ineqf12}
F_{1/2}(\O)\le\frac{P(dE_a)\big(T(dE_a)\big)^{1/2}}{|E_a|^{3/2}}.
\ee
Since the solution of \eqref{pdetorsion} for $E_a$ is given by
$$u(x)=\frac12\bigg(\sum_{i=1}^d a_i^{-2}\bigg)^{-1}\bigg(1-\sum_{i=1}^{d}\frac{x_i^2}{a_i^2}\bigg),$$
we obtain
$$T(E_a)=\frac{\omega_d}{d+2}\bigg(\sum_{i=1}^d a_i^{-2}\bigg)^{-1}\prod_{i=1}^d a_i,$$
while
$$|E_a|=\omega_d\prod_{i=1}^d a_i.$$
To estimate $P(E_a)$ we notice that $E_a$ is contained in the cuboid $Q=\prod_1^d]-a_i,a_i[$, so that
$$P(E_a)\le P(Q)=2\sum_{i=1}^d\prod_{j\ne i}(2a_j)=2^d\bigg(\sum_{i=1}^d\frac{1}{a_i}\bigg)\prod_{i=1}^d a_i.$$ 
Combining these formulas we have from \eqref{ineqf12}
$$F_{1/2}(\O)\le\frac{2^d d^{3d/2}}{\omega_d(d+2)^{1/2}}\bigg(\sum_{i=1}^d\frac{1}{a_i}\bigg)\bigg(\sum_{i=1}^d\frac{1}{a_i^2}\bigg)^{-1/2}$$
and finally, by Jensen inequality,
$$F_{1/2}(\O)\le\frac{2^d d^{3d/2}}{\omega_d}\sqrt{\frac{d}{d+2}}\;,$$
as required.
\end{proof}

On the precise value of $\sup\big\{F_{1/2}(\O):\O\in\A_{convex}\big\}$ we make the following conjecture.

\begin{conj}\label{conjecture}
We have
$$\sup\big\{F_{1/2}(\O)\ :\ \O\in\A_{convex}\big\}=d\Big(\frac{2}{(d+1)(d+2)}\Big)^{1/2}$$
and it is asymptotically reached by taking for instance
$$\O_\eps=\big\{(s,t)\ :\ s\in A,\ 0<t<\eps(1-|s|)\big\}$$
as $\eps\to0$, where $A$ is the unit ball in $\R^{d-1}$.
\end{conj}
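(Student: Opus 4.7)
The plan is to recast the problem as a one-dimensional variational problem by passing to the thin-convex-domain limit, and then to identify the extremal profile explicitly.

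First I would verify the proposed asymptotic value. For the family $\O_\eps$ in the statement, the thin-domain asymptotics from Section \ref{sthin} give
\[
P(\O_\eps)\to 2\HH^{d-1}(A),\quad |\O_\eps|\sim\eps\int_A(1-|s|)\,ds,\quad T(\O_\eps)\sim\frac{\eps^3}{12}\int_A(1-|s|)^3\,ds.
\]
Standard beta-function computations in radial coordinates on the unit ball $A\subset\R^{d-1}$ yield $\int_A(1-|s|)\,ds=\omega_{d-1}/d$ and $\int_A(1-|s|)^3\,ds=6\omega_{d-1}/\bigl(d(d+1)(d+2)\bigr)$. Substituting into $F_{1/2}$, all geometric quantities combine and the powers of $\eps$ cancel, leaving exactly $d\sqrt{2/((d+1)(d+2))}$.

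Second, I would try to reduce the optimization over all convex bodies to the thin subclass. The idea is to compress a generic $\O\in\A_{convex}$ along its shortest John-ellipsoid direction by a parameter $\lambda\to 0$ and to study the behaviour of $F_{1/2}$ along this one-parameter family. If monotonicity (or, at worst, non-decrease in the limit) can be established, then
\[
\sup_{\A_{convex}}F_{1/2}=\sup_{\A_{thin}\cap\A_{convex}}F_{1/2},
\]
and the conjecture reduces to the thin case. This reduction is the principal obstacle: in $d=2$ a planar convex set is encoded by two concave width functions, providing enough rigidity to carry out the argument directly, while in $d\ge 3$ the additional spatial directions make the reduction delicate, which is why the conjecture is presently open in higher dimensions.

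On a thin convex domain $\{0\le t\le h(s)\}$ with concave $h\colon A\to[0,\infty)$, the functional takes the asymptotic form $|A|\bigl(\int_A h^3\bigr)^{1/2}/\bigl(\sqrt 3\bigl(\int_A h\bigr)^{3/2}\bigr)$. By Brunn--Minkowski the distribution $V(t)=|\{h\ge t\}|$ is $\tfrac{1}{d-1}$-concave on $[0,\max h]$, and after the rescaling $W(s)=V(sM)/V(0)$ with $M=\max h$ both $|A|$ and $M$ cancel, leaving
\[
F_{1/2}(\O)\sim\frac{\bigl(\int_0^1 s^2W(s)\,ds\bigr)^{1/2}}{\bigl(\int_0^1 W(s)\,ds\bigr)^{3/2}}=:J(W).
\]
The admissible profiles $W$ are decreasing from $1$ to $0$ with $W^{1/(d-1)}$ concave, and the conjectured extremal $W_*(s)=(1-s)^{d-1}$ corresponds to the cone $h(s)=1-|s|$.

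Finally, the sharp one-dimensional inequality $J(W)\le d\sqrt{2/((d+1)(d+2))}$ can be established by noting that the constraint $W^{1/(d-1)}$ concave with the prescribed endpoint values forces $W\ge W_*$ pointwise, so $W_*$ is the smallest admissible profile; one then identifies the extreme points of the admissible class with the truncated cones $c(1-s/\sigma)_+^{d-1}$ and, using the linearity of the numerator in $W$ after normalising the denominator, reduces the optimisation to this one-parameter family, where an explicit beta-function computation singles out $W_*$. This 1D step is effectively the content of Section \ref{sthin}, so the full conjecture hinges on the reduction to $\A_{thin}$, which is the main obstacle I expect to face.
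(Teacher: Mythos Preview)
The statement you are attempting to prove is a \emph{conjecture} in the paper; the authors do not prove it. They verify it only on the subclass $\A_{thin}\cap\A_{convex}$ (their Theorem in Section~\ref{sthin}), and they recall that the planar case $d=2$ is known from \cite{polya60,makai62,FGL13}. So there is no ``paper's own proof'' to compare against for the full statement.

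Your proposal is candid about this: the reduction from $\A_{convex}$ to $\A_{thin}\cap\A_{convex}$ is exactly the missing step, and you correctly flag it as the principal obstacle. Note, however, that your suggested mechanism---monotonicity of $F_{1/2}$ under one-directional compression $\lambda\to 0$---is not something one can expect to get cheaply. A linear compression $x_d\mapsto\lambda x_d$ rescales $|\O|$ by $\lambda$ but does \emph{not} rescale $P$ or $T$ by clean powers of $\lambda$ (neither is invariant under anisotropic dilations), so there is no obvious sign for $\partial_\lambda F_{1/2}$. Any proof of monotonicity here would already be a substantial advance toward the conjecture.

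On the thin case your route diverges from the paper's. You pass to the level-set profile $V(t)=|\{h\ge t\}|$, invoke Brunn--Minkowski to make $V^{1/(d-1)}$ concave, and then argue via extreme points of a one-dimensional class. The paper bypasses all of this: it applies Borell's sharp inequality (their Theorem~\ref{borell}) directly to the concave function $h$ on the $(d-1)$-dimensional convex set $A$, with $p=1$, $q=3$, obtaining $\avint_A h^3\le C_{1,3}^3\bigl(\avint_A h\bigr)^3$ in one line, and the constant $C_{1,3}^{3/2}/\sqrt3$ is exactly $d\sqrt{2/((d+1)(d+2))}$. Your extreme-point sketch is plausible but incomplete as written (the extreme points of $\{W:W^{1/(d-1)}\text{ concave},\,W(0)=1,\,W(1)=0\}$ are not simply the truncated cones $c(1-s/\sigma)_+^{d-1}$, and the normalisation $\int W=\text{const}$ interacts awkwardly with the pointwise constraint $W\ge W_*$); in any case Borell's inequality is the clean tool here and is worth citing directly.

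Your verification that the cone family $\O_\eps$ attains the conjectured value is correct.
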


\begin{rema}
We recall that Conjecture \ref{conjecture} has been shown to be true in the case $d=2$ (see \cite{polya60}, \cite{makai62}, and the more recent paper \cite{FGL13}). In Section \ref{sthin} we prove the conjecture above for every $d\ge2$ in the class of convex thin domains.
\end{rema}

We show now that for $F_q$ in the class $\A_{convex}$ the minimization problem is well posed when $q<1/2$ and the maximization problem is well posed when $q>1/2$. From the bounds obtained in Propositions \ref{polya} and \ref{finite} we can prove the following results.

\begin{prop}
We have
\[\begin{cases}
\inf\big\{F_q(\O)\ :\ \O\in\A_{convex}\big\}\ge3^{-1/2}\big(d(d+2)\big)^{1/2-q}\omega_d^{(1-2q)/d}&\text{for every }q\le1/2\\
\sup\big\{F_q(\O)\ :\ \O\in\A_{convex}\big\}\le\ds\frac{2^d d^{3d/2-q+1}}{(d+2)^q\omega_d^{1+(2q-1)/d}}&\text{for every }q\ge1/2.
\end{cases}\]
\end{prop}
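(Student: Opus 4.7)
The plan is to interpolate the two estimates at the critical exponent $q=1/2$ (Propositions \ref{polya} and \ref{finite}) against the Saint-Venant inequality \eqref{stven}. The key observation is that, by the definition $\alpha_q=1+q+(2q-1)/d$, one has $\alpha_q-3/2=(2q-1)(d+2)/(2d)$, so that
\[
F_q(\O)=F_{1/2}(\O)\left(\frac{T(\O)}{|\O|^{(d+2)/d}}\right)^{q-1/2}.
\]
This factorization reduces the problem to a one-sided bound on the second factor, which is precisely the content of Saint-Venant.

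By \eqref{stven} together with \eqref{torball},
\[
\frac{T(\O)}{|\O|^{(d+2)/d}}\le\frac{1}{d(d+2)\,\omega_d^{2/d}}.
\]
When $q\le 1/2$ the exponent $q-1/2$ is nonpositive, so the above inequality yields a \emph{lower} bound on the second factor equal to $\bigl(d(d+2)\bigr)^{1/2-q}\omega_d^{(1-2q)/d}$; multiplying by Polya's lower bound $F_{1/2}(\O)\ge 3^{-1/2}$ from Proposition \ref{polya} gives the claimed inequality for $\inf F_q$.

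When $q\ge 1/2$ the same Saint-Venant estimate instead provides an \emph{upper} bound on the second factor, and combining with Proposition \ref{finite} one obtains
\[
F_q(\O)\le\frac{2^d d^{3d/2}}{\omega_d}\sqrt{\frac{d}{d+2}}\cdot\bigl(d(d+2)\bigr)^{1/2-q}\omega_d^{(1-2q)/d},
\]
which after collecting powers of $d$, $d+2$ and $\omega_d$ reduces to $2^d d^{3d/2-q+1}/\bigl[(d+2)^q\omega_d^{1+(2q-1)/d}\bigr]$, as required. No genuine obstacle is expected beyond this final bookkeeping of exponents; both analytic ingredients are already in place from the preceding two propositions and Saint-Venant.
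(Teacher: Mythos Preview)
Your proof is correct and follows essentially the same approach as the paper: both use the factorization $F_q(\O)=F_{1/2}(\O)\bigl(T(\O)/|\O|^{(d+2)/d}\bigr)^{q-1/2}$, combine the bounds of Propositions \ref{polya} and \ref{finite} on the first factor with the Saint-Venant inequality on the second, and then insert the value \eqref{torball}. Your version just spells out the exponent bookkeeping a bit more explicitly.
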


\begin{proof}
We have
$$F_q(\O)=F_{1/2}(\O)\left(\frac{T(\O)}{|\O|^{(d+2)/d}}\right)^{q-1/2}.$$
Hence it is enough to apply the bounds \eqref{polyaineq} and \eqref{finitebound}, together with the Saint Venant inequality \eqref{stven} to get that for every $\O\in\A_{convex}$
\[\begin{split}
&\inf\big\{F_q(\O)\ :\ \O\in\A_{convex}\big\}\ge3^{-1/2}\left(\frac{T(B)}{B^{(d+2)/d}}\right)^{q-1/2}\qquad\text{if }q\le1/2\\
&\sup\big\{F_q(\O)\ :\ \O\in\A_{convex}\big\}< \frac{2^d d^{3d/2}}{\omega_d}\sqrt{\frac{d}{d+2}}\left(\frac{T(B)}{B^{(d+2)/d}}\right)^{q-1/2}\qquad\text{if }q\ge1/2.
\end{split}\]
By the expression \eqref{torball} for $T(B)$ we conclude the proof.
\end{proof}

We now prove the existence of a convex minimizer when $q<1/2$ and of a convex maximizer when $q>1/2$.

\begin{theo}
There exists a solution for the following optimization problems:
\[\begin{cases}
\min\big\{F_q(\O)\ :\ \O\in\A_{convex}\big\}&\text{for every }q<1/2;\\
\max\big\{F_q(\O)\ :\ \O\in\A_{convex}\big\}&\text{for every }q>1/2.
\end{cases}\]
\end{theo}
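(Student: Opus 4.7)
The plan is to apply the direct method in the class of convex bodies. By the scaling invariance of $F_q$, I may normalize an optimizing sequence $\Omega_n$ so that $|\Omega_n|=1$. The core step is to show that, after a suitable translation, all the $\Omega_n$ satisfy $B_{a_0}\subset\Omega_n\subset B_R$ for constants $a_0,R>0$ independent of $n$. Once this is done, Blaschke's selection theorem yields a subsequence converging in Hausdorff distance to a convex body $\Omega_*$ with non-empty interior, and the continuity of $|\cdot|$, $P$ and $T$ on convex bodies with non-empty interior under Hausdorff convergence (for $T$ via $\gamma$-convergence of the associated Dirichlet Laplacians, which holds as soon as the $\Omega_n$ contain a common ball) yields $F_q(\Omega_n)\to F_q(\Omega_*)$, so $\Omega_*$ attains the optimum.

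The crucial ingredient is a uniform bound $T(\Omega_n)\ge t_0>0$, obtained differently in the two regimes. For $q<1/2$, Polya's inequality from Proposition \ref{polya} gives $F_q(\Omega_n)\ge 3^{-q}P(\Omega_n)^{1-2q}$ (with $|\Omega_n|=1$). Since $F_q(\Omega_n)$ is bounded above along a minimizing sequence and $1-2q>0$, the perimeters $P(\Omega_n)$ stay bounded, and then $T(\Omega_n)^q=F_q(\Omega_n)/P(\Omega_n)\ge\inf F_q/\sup_n P(\Omega_n)>0$. For $q>1/2$, from the identity
\[
F_q(\Omega)=F_{1/2}(\Omega)\bigg(\frac{T(\Omega)}{|\Omega|^{(d+2)/d}}\bigg)^{q-1/2}
\]
and the universal bound $F_{1/2}\le M$ furnished by Proposition \ref{finite}, one has $T(\Omega_n)^{q-1/2}\ge F_q(\Omega_n)/M$, and since $F_q(\Omega_n)\to\sup F_q>0$ this forces $T(\Omega_n)\ge t_0>0$.

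The geometric compactness then follows from John's ellipsoid theorem, exactly as in the proof of Proposition \ref{finite}: there is an ellipsoid $E_{a(n)}$ with semi-axes $a_1^{(n)}\le\cdots\le a_d^{(n)}$ and $E_{a(n)}\subset\Omega_n\subset dE_{a(n)}$, and the volume normalization yields $(d^d\omega_d)^{-1}\le\prod_i a_i^{(n)}\le\omega_d^{-1}$. By monotonicity of $T$ and the explicit torsion of an ellipsoid computed in Proposition \ref{finite},
\[
t_0\le T(\Omega_n)\le T\big(dE_{a(n)}\big)=\frac{d^{d+2}\omega_d}{d+2}\,\frac{\prod_i a_i^{(n)}}{\sum_i\big(a_i^{(n)}\big)^{-2}},
\]
so $\sum_i(a_i^{(n)})^{-2}$ is bounded, which forces $a_i^{(n)}\ge a_0>0$ for every $i$; combined with $\prod_i a_i^{(n)}\le\omega_d^{-1}$ this also gives $a_i^{(n)}\le A$. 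Translating the center of each $E_{a(n)}$ to the origin, the uniform inclusions $B_{a_0}\subset\Omega_n\subset B_{dA}$ hold and Blaschke concludes.

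The main obstacle is precisely this non-degeneracy step: the John-ellipsoid and Polya estimates from Propositions \ref{polya} and \ref{finite} have to be combined carefully in order to exclude the degenerate regime $\min_i a_i^{(n)}\to 0$ which, by the slab computation of Proposition \ref{illq}, would drive $F_q$ to $+\infty$ for $q<1/2$ or to $0$ for $q>1/2$ and thus destroy the sequence.
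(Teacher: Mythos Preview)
Your argument is correct and follows the same overall scheme as the paper: normalize an optimizing sequence, use John's ellipsoid to trap $\Omega_n$ between $E_{a(n)}$ and $dE_{a(n)}$, show the semi-axes do not degenerate, and conclude by Blaschke selection plus the standard continuity of $|\cdot|$, $P$ and $T$ under Hausdorff convergence of convex bodies.

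The one genuine difference is in how you establish non-degeneracy. The paper works directly with $F_q$: it bounds $F_q(\Omega_n)$ above and below by explicit expressions in the semi-axes $a_{in}$ (via the cuboid perimeter estimate and the ellipsoid torsion formula), and reads off from these expressions that $a_{1n}\to 0$ would force $F_q(\Omega_n)\to+\infty$ (for $q<1/2$) or $F_q(\Omega_n)\to 0$ (for $q>1/2$). You instead factor through a uniform lower bound $T(\Omega_n)\ge t_0>0$, obtained for $q<1/2$ from Polya's inequality (which first bounds the perimeters) and for $q>1/2$ from the $F_{1/2}$-bound of Proposition~\ref{finite}; then the single inequality $t_0\le T(dE_{a(n)})$ and the ellipsoid torsion formula give the semi-axis bound in one stroke. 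Your route is slightly more conceptual and avoids the Cauchy--Schwarz manipulations the paper performs on the $a_{in}$'s, at the price of invoking Propositions~\ref{polya} and~\ref{finite} as black boxes; the paper's route is more self-contained at this point but more computational. Both are valid.
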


\begin{proof}
Suppose $q<1/2$ and consider $\O_n$ a minimizing sequence for $F_q(\O)$. By the John's ellipsoid Theorem we can assume that there exists a sequence of ellipsoids $E_{a_n}$ such that 
$$E_{a_n}\subset\O_n\subset dE_{a_n}.$$
By rotations, translations and scaling invariance of $F_q$ we can assume without loss of generality that 
$$E_{a_n}=\bigg\{x\in\R^d\ :\ \sum_{i=1}^d\frac{x_{i}^{2}}{a_{in}^{2}}< 1\bigg\},\quad a_n=(a_{1n},\dots,a_{dn}),\ 0<a_{1n}\le\dots\le a_{dn}=1.$$
Observe that this implies that the diameter of $\O_n$ is uniformly bounded in $n$. We claim that
$$a_{1n}\ge c\qquad\text{for every }n\in\N$$
where $c$ is a positive constant. Then the proof is achieved by extracting a subsequence $\O_{n_k}$ which converges both in the sense of characteristic functions and in the Hausdorff metric to some open, non empty, convex, bounded set $\O^-$ and by using the continuity properties of torsional rigidity, perimeter and volume (see for instance, \cite{bubu05}, \cite{hepi05}).

To prove the claim we use a strategy similar to the one already used in the proof of Proposition \ref{finite}. Let $Q_{a_n}$ be the cuboid $\prod_{i=1}^d]-a_{in},a_{in}[$. Since
$$d^{-1/2}Q_{a_n}\subset E_{a_n}$$
we have, for $n$ large enough,
\be\label{contoa0}
F_q(B_1)\ge F_q(\O_n)\ge\frac{1}{d^{(d-1)/2}d^{d\alpha_q}}\frac{T^q(E_{a_n})P(Q_{a_n})}{|E_{a_n}|^{\alpha_q}}.
\ee
An explicit computation shows
$$\frac{T^q(E_{a_n})P(Q_{a_n})}{|E_{a_n}|^{\alpha_q}}
=\frac{2^d\omega_d^{q-\alpha_q}}{(d+2)^q}\left(\frac{\sum_{i=1}^d a_{in}^{-1}}{\big(\sum_{i=1}^d a_{in}^{-2}\big)^{1/2}}\right)\left(\frac{\big(\sum_{i=1}^d a_{in}^{-2}\big)^{1/2}}{(\prod_{i=1}^d a_{in}^{-1})^{1/d}}\right)^{1-2q}.$$
Observe that, by Cauchy-Schwarz inequality,
\be\label{stimamid_a1}
1\le\frac{\sum_{i=1}^d a_{in}^{-1}}{(\sum_{i=1}^d a_{in}^{-2})^{1/2}}\le\sqrt{d},
\ee
while for the last term it holds
\be\label{stima_a1}\frac{\big(\sum_{i=1}^d a_{in}^{-2}\big)^{1/2}}{(\prod_{i=1}^d a_{in}^{-1})^{1/d}}
=\frac{\big(\sum_{i=1}^d a_{in}^{-2}\big)^{1/2}}{(\prod_{i=1}^{d-1} a_{in}^{-1})^{1/d}}
\ge\frac{a_{1n}^{-1}}{\big(a_{1n}^{-1}\big)^{(d-1)/d}}=\left(\frac{1}{a_{1n}}\right)^{1/d}
\ee
Therefore, putting together \eqref{contoa0}--\eqref{stima_a1} and using the fact that $q<1/2$ we obtain that, if $n$ is large enough, the sequence $a_{1n}$ must be greater than some positive constant $c$, which proves the claim.

The case $q>1/2$ can be proved in a similar way. If $\O_n$ is a maximizing sequence for $F_q(\O)$ and $E_{a_n}$ are ellipsoids such that $E_{a_n}\subset\O_n\subset dE_{a_n}$, we have
\be\label{ineqmax}
F_q(B_1)\le F_q(\O_n)\le\frac{P(dE_{a_n})T^q(dE_{a_n})}{|E_{a_n}|^{\alpha_q}}=d^{d-1+q(d+2)}\frac{P(E_{a_n})T^q(E_{a_n})}{|E_{a_n}|^{\alpha_q}}\;.
\ee
If $Q_{a_n}$ is the cuboid $\prod_{i=1}^d]-a_{in},a_{in}[$ we have $E_{a_n}\subset Q_{a_n}$, so that
$$P(E_{a_n})\le P(Q_{a_n})=2^d\bigg(\sum_{i=1}^d a_{in}^{-1}\bigg)\prod_{i=1}^d a_{in}\;.$$
Hence \eqref{ineqmax} implies, for a suitable constant $C_{q,d}$ depending only on $q$ and on $d$,
$$F_q(B_1)\le C_{q,d}\frac{\sum_{i=1}^d a_{in}^{-1}}{\big(\sum_{i=1}^d a_{in}^{-2}\big)^q\big(\prod_{i=1}^d a_{in}\big)^{(2q-1)/d}}
\le d^qC_{q,d}\bigg(\frac{\big(\prod_{i=1}^d a_{in}^{-1}\big)^{1/d}}{\sum_{i=1}^d a_{in}^{-1}}\bigg)^{2q-1},$$
where in the last inequality we used the Cauchy-Schwarz inequality \eqref{stimamid_a1}. Finally, since $a_{in}\le a_{dn}=1$, we obtain
$$F_q(B_1)\le d^q C_{q,d}(a_{in}^{-1})^{(2q-1)/d}$$
and, since $q>1/2$, the conclusion follows as in the previous case.
\end{proof}

\section{Optimization in the class of thin domains\label{sthin}}

In this section we consider the class of {\it thin domains}
$$\O_\eps=\big\{(s,t)\ :\ s\in A,\ \eps h_-(s)<t<\eps h_+(s)\big\}$$
where $\eps$ is a small positive parameter, $A$ is a (smooth) domain of $\R^{d-1}$, and $h_-,h_+$ are two given (smooth) functions. We denote by $h(s)$ the {\it local thickness}
\[h(s)=h_+(s)-h_-(s),\]
and we assume that $h(s)\ge0$. The following asymptotics hold for the quantities we are interested to (for the torsional rigidity we refer to \cite{bofr13}):
\[\begin{split}
&P(\O_\eps)\approx2\HH^{d-1}(A),\\
&T(\O_\eps)\approx\frac{\eps^3}{12}\int_A h^3(s)\,ds,\\
&|\O_\eps|=\eps\int_A h(s)\,ds,
\end{split}\]
which together give the asymptotic formula when $q=1/2$
\be\label{asympt}
\begin{split}
F_{1/2}(\O_\eps)&\approx 3^{-1/2}\HH^{d-1}(A)\Big[\int_A h^3(s)\,ds\Big]^{1/2}\Big[\int_A h(s)\,ds\Big]^{-3/2}\\
&=3^{-1/2}\bigg[\Big[\avint_{\hskip-5pt A}h^3(s)\,ds\Big]\Big[\avint_{\hskip-5pt A}h(s)\,ds\Big]^{-3}\bigg]^{1/2}
\end{split}
\ee
where we use the notation
$$\avint_{\hskip-5pt A}f(s)\,ds=\frac{1}{\HH^{d-1}(A)}\int_A f(s)\,ds.$$
By H\"older inequality we have
$$\lim_{\eps\to0}F_{1/2}(\O_\eps)\ge3^{-1/2}$$
and the value $3^{-1/2}$ is actually reached by taking the local thickness function $h$ constant, which corresponds to $\O_\eps$ a thin {\it slab}.

A sharp inequality from above is also possible for $F_{1/2}(\O_\eps)$, if we restrict the analysis to {\it convex} domains, that is to local thickness functions $h$ which are {\it concave}. The following result will be used, for which we refer to \cite{bor73}, \cite{gar98}.

\begin{theo}\label{borell}
Let $1\le p\le q$. Then for every convex set $A$ of $\R^N$ $(N\ge1)$ and every nonnegative concave function $f$ on $A$ we have
$$\Big[\avint_{\hskip-5pt A}f^q\,dx\Big]^{1/q}\le C_{p,q}\Big[\avint_{\hskip-5pt A}f^p\,dx\Big]^{1/p}$$
where the constant $C_{p,q}$ is given by
$$C_{p,q}=\binom{N+p}{N}^{1/p}\binom{N+q}{N}^{-1/q}.$$
In addition, the inequality above becomes an equality when $A$ is a ball of radius $1$ and $f(x)=1-|x|$.
\end{theo}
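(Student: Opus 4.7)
The plan is to follow the classical Berwald-Borell route: reduce the statement to a one-dimensional inequality via the layer-cake representation and exploit the Brunn-Minkowski inequality. Writing $M=\sup_A f$ and letting $\phi(t)=|\{f>t\}|$ be the distribution function of $f$, the layer-cake formula gives $\int_A f^r\,dx = r\int_0^M t^{r-1}\phi(t)\,dt$, so that both sides of the inequality are expressed in terms of the single function $\phi$. The concavity of $f$ on the convex set $A$ forces the super-level sets $\{f>t\}$ to be convex, and the Brunn-Minkowski inequality then yields that $t\mapsto\phi(t)^{1/N}$ is concave on $[0,M]$.

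Using the invariance of both sides of the target inequality under the scalings $f\mapsto cf$ and $A\mapsto\lambda A$, one may normalize so that $M=1$ and $|A|=\omega_N$; then $\phi(t)=\omega_N\sigma(t)^N$ for a concave $\sigma:[0,1]\to[0,1]$ with $\sigma(0)=1$, $\sigma(1)=0$, and the target inequality becomes
\[
\Big(q\int_0^1 t^{q-1}\sigma(t)^N\,dt\Big)^{1/q}\le C_{p,q}\Big(p\int_0^1 t^{p-1}\sigma(t)^N\,dt\Big)^{1/p}
\]
for all such $\sigma$. The extremal choice $\sigma(t)=1-t$ corresponds precisely to $f(x)=1-|x|$ on the unit ball; a direct Beta-function computation gives $\avint_{\hskip-5pt A} f^r\,dx=\binom{N+r}{N}^{-1}$, which both produces the constant $C_{p,q}$ and confirms equality in this case.

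The main obstacle is to show that $\sigma(t)=1-t$ is indeed extremal for the ratio among all concave $\sigma$ with the prescribed boundary values. A convenient path is to perform the change of variable $u=\sigma(t)$ and integrate by parts, which recasts the integrals as $J_r=N\int_0^1 u^{N-1}h(u)^r\,du$, where $h=\sigma^{-1}:[0,1]\to[0,1]$ is convex and decreasing with $h(0)=1$ and $h(1)=0$. The inequality is then equivalent to the monotonicity statement that $r\mapsto\binom{N+r}{N}^{1/r}J_r^{1/r}$ is nonincreasing on $(0,\infty)$. One effective way to obtain this monotonicity is the probabilistic representation $J_r=E\big[(1-\max_{i\le N}S_i)^r\big]$ where $S_1,\dots,S_N$ are i.i.d.\ on $[0,1]$ with a nonincreasing density inherited from the concavity of $\sigma$, reducing the statement to a Chebyshev-type comparison with the uniform case, where the extremum is attained. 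Since the theorem is classical, the technical details of this last step can be consulted in \cite{bor73} and in the Brunn-Minkowski survey \cite{gar98}, whose arguments I would follow to complete the proof.
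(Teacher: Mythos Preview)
The paper does not prove this theorem at all: it is quoted as a known result with a reference to \cite{bor73} and \cite{gar98}, and then applied. Your proposal therefore already offers more than the paper does. The route you sketch --- layer-cake representation, concavity of $\phi^{1/N}$ via Brunn--Minkowski, normalization to a one-variable inequality for a concave profile $\sigma$, and identification of the cone $f(x)=1-|x|$ on the unit ball as extremal, with the Beta-function computation confirming the constant --- is precisely the classical Berwald--Borell argument found in those references. The only soft spot is the final monotonicity step, where your probabilistic rewriting and ``Chebyshev-type comparison'' are indicated rather than carried out; but you explicitly defer that step to \cite{bor73} and \cite{gar98}, which is exactly what the paper itself does for the entire statement, so there is no discrepancy to flag.
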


We are now in a position to prove the Conjecture \ref{conjecture} for convex thin domains.

\begin{theo}
If $\O_\eps$ are thin convex domains with local thickness $h$, we have
\be\label{equalthin}
\lim_{\eps\to0}F_{1/2}(\O_\eps)\le d\Big(\frac{2}{(d+1)(d+2)}\Big)^{1/2}.
\ee
In addition, the inequality above becomes an equality taking for instance as $A$ the unit ball of $\R^{d-1}$ and as the local thickness $h(s)$ the function $1-|s|$.
\end{theo}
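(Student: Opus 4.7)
The plan is to reduce the inequality \eqref{equalthin} to a sharp scalar estimate on the local thickness $h$ and then apply the Borell-type Theorem \ref{borell}. Starting from the asymptotic identity \eqref{asympt}, the statement reduces to proving
$$\avint_A h^3(s)\,ds\le\frac{6d^2}{(d+1)(d+2)}\Big(\avint_A h(s)\,ds\Big)^3$$
for every nonnegative concave function $h$ on a convex set $A\subset\R^{d-1}$; raising this to the power $1/2$ and multiplying by $3^{-1/2}$ then gives exactly the announced bound.

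The first step is to justify the concavity of $h$. Since $\O_\eps=\{(s,t):s\in A,\ \eps h_-(s)<t<\eps h_+(s)\}$ is convex, its projection onto the $s$-hyperplane forces $A$ to be convex, and testing convexity on pairs of points lying just below $t=\eps h_+(s)$ and just above $t=\eps h_-(s)$ shows that $h_+$ is concave and $h_-$ is convex on $A$; consequently $h=h_+-h_-$ is nonnegative and concave, so Theorem \ref{borell} is applicable to $f=h$.

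The second step is to apply Theorem \ref{borell} with $N=d-1$, $p=1$ and $q=3$. A short computation of the binomial coefficients gives
$$C_{1,3}^{\,3}=\frac{\binom{d}{d-1}^{3}}{\binom{d+2}{d-1}}=\frac{6\,d^3}{d(d+1)(d+2)}=\frac{6\,d^2}{(d+1)(d+2)},$$
which is precisely the constant appearing in the reduction above. Substituting into \eqref{asympt} then yields
$$\lim_{\eps\to0}F_{1/2}(\O_\eps)\le 3^{-1/2}\Big(\frac{6\,d^2}{(d+1)(d+2)}\Big)^{1/2}=d\Big(\frac{2}{(d+1)(d+2)}\Big)^{1/2},$$
which is \eqref{equalthin}.

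For the sharpness assertion, the equality case of Theorem \ref{borell} is spelled out in its statement: equality holds when $A$ is a unit ball and $f(x)=1-|x|$. Choosing $A$ to be the unit ball of $\R^{d-1}$ and $h(s)=1-|s|$ therefore saturates the Borell-type inequality, and via the identity \eqref{asympt} produces equality in \eqref{equalthin}. I do not anticipate any real obstacle: the only conceptual points are recognizing that convexity of $\O_\eps$ forces $h$ to be concave, and that the cubic-over-linear structure of \eqref{asympt} singles out exactly the exponents $p=1$, $q=3$ in Theorem \ref{borell}; everything else is just identifying the resulting constant.
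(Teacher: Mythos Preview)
Your proof is correct and follows essentially the same route as the paper: apply the asymptotic identity \eqref{asympt} and then Theorem~\ref{borell} with $N=d-1$, $p=1$, $q=3$, identify $3^{-1/2}C_{1,3}^{3/2}=d\big(2/((d+1)(d+2))\big)^{1/2}$, and invoke the equality case of Theorem~\ref{borell} for sharpness. The only addition is your explicit justification that convexity of $\O_\eps$ forces $h$ to be concave, which the paper takes as part of the standing hypotheses on thin convex domains.
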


\begin{proof}
By \eqref{asympt} we have
$$\lim_{\eps\to0}F_{1/2}(\O_\eps)=3^{-1/2}\bigg[\Big[\avint_{\hskip-5pt A}h^3(s)\,ds\Big]\Big[\avint_{\hskip-5pt A}h(s)\,ds\Big]^{-3}\bigg]^{1/2}.$$
In addition, by Theorem \ref{borell} with $N=d-1$, $q=3$, $p=1$, we obtain
$$\avint_{\hskip-5pt A}h^3\,dx\le C_{1,3}^3\Big[\avint_{\hskip-5pt A}h\,dx\Big]^3\;,$$
so that
$$\lim_{\eps\to0}F_{1/2}(\O_\eps)\le3^{-1/2}C_{1,3}^{3/2}=d\Big(\frac{2}{(d+1)(d+2)}\Big)^{1/2}$$
as required. Finally, an easy computation shows that in \eqref{equalthin} the inequality becomes an equality if $A$ is the unit ball of $\R^{d-1}$ and $h(s)=1-|s|$.
\end{proof}

\bigskip

\noindent{\bf Acknowledgments.} The work of GB is part of the project 2017TEXA3H {\it``Gradient flows, Optimal Transport and Metric Measure Structures''} funded by the Italian Ministry of Research and University. The authors are member of the Gruppo Nazionale per l'Analisi Matematica, la Probabilit\`a e le loro Applicazioni (GNAMPA) of the Istituto Nazionale di Alta Matematica (INdAM).

\bigskip

\bigskip
{\small\noindent
Luca Briani:
Dipartimento di Matematica,
Universit\`a di Pisa\\
Largo B. Pontecorvo 5,
56127 Pisa - ITALY\\
{\tt luca.briani@phd.unipi.it}

\bigskip\noindent
Giuseppe Buttazzo:
Dipartimento di Matematica,
Universit\`a di Pisa\\
Largo B. Pontecorvo 5,
56127 Pisa - ITALY\\
{\tt giuseppe.buttazzo@dm.unipi.it}\\
{\tt http://www.dm.unipi.it/pages/buttazzo/}

\bigskip\noindent
Francesca Prinari:
Dipartimento di Matematica e Informatica,
Universit\`a di Ferrara\\
Via Machiavelli 30,
44121 Ferrara - ITALY\\
{\tt francescaagnese.prinari@unife.it}\\
{\tt http://docente.unife.it/francescaagnese.prinari/}}

\end{document}